\begin{document}

\title{An additional structure over integer rings $\mathbb{Z}_{p^r}^n$ }
\markright{An additional structure over integer rings}
\author{A. Cambraia Jr, A. O. Moura and A. T. Silva}

%
%
%
%
%
%

\institute{A. Cambraia Jr, A. O. Moura and A. T. Silva \at
              Departamento de Matemática, Universidade Federal de Viçosa, Av. P. H. Rolfs, s/n, Campus Universitário, Viçosa, Minas Gerais, Brazil. \\
              \email{ady.cambraia@ufv.br, allan.moura@ufv.br, anderson.tiago@ufv.br}           
}

\date{Received: date / Accepted: date}

\maketitle

\begin{abstract}
We present an algebraic structure in modules over integer rings with cardinality prime powers, which allows to define bases. With such structure, we prove a similar version for the basis extension theorem of linear algebra over fields. Moreover, we exhibit results involving the modules and their duals.

\end{abstract}

%

\section{Introduction.}

The vector spaces over fields are very important in the research on several scientific areas, due to their wide application, as can
be observed in Codes Theory, Computational Modeling, Cryptography,
Quantum Mechanics, etc.

These sets are important due to their algebraic structure, which allows
obtaining bases, writing any element in the set as an unique linear combination
of the basis, and obtaining results over dimension and the dual set
and its properties.

Modules over rings are an algebraic structure that is a natural extension of the vector
spaces over fields. The free modules are a particular example
of modules over rings. Such sets admit a similar
algebraic structure as the vectors spaces. It is still possible to obtain bases, to determine dimensions and so on. However, most modules over rings are not free modules. Recent researches,
for example in the Error Correcting Codes Theory, has shown that
these concepts, related to vectors spaces, should be extended to modules. Good codes that are submodules of the $\mathbb{Z}_{4}^{n}$ were determined in  \cite{hammons}, \cite{conway} and  \cite{pless}. However, it is very difficult
to work with such set over rings.
So, concepts for any module over $\mathbb{Z}_{p^r}^{n}$ with $p$ prime, to obtain a structure
of the vector spaces to such sets, were introduced in \cite{calderbank} and \cite{kuijper}. Such structure has been used by several researchers. However, there is little information about modules with this structure.


In this work we prove the similar theorem of the basis extension of linear algebra over fields, but
for modules over $\mathbb{Z}_{p^r}$ rings. Moreover, we exhibit results involving the modules and their duals.

To illustrate the importance of setting up a structure that allows defining basis and describing elements of unique form, we present the following example. Let $\mathbb{Z}_{8}^{3}$ be the module over the
ring $\mathbb{Z}_{8}$. Let 
\[
\begin{array}{rl}
M=\left\{ (0,0,0),(2,0,1),(4,0,2),(6,0,3),(0,0,4),(2,0,5),(4,0,6),(6,0,7),\right.\\
\left. (0,4,0),(2,4,1),(4,4,2),(6,4,3),(0,4,4),(2,4,5),(4,4,6),(6,4,7)\right\}
\end{array} 
\]
 be a submodule of $\mathbb{Z}_{8}^{3}$ over $\mathbb{Z}_{8}$. We
observe that a minimal generator set for $M$ is $\beta=\left\{ (2,0,1),(0,4,0)\right\} $.
In this case, there is no unicity for the description of the elements
of $M$ as $\mathbb{Z}_{8}$ linear combination of $\beta$. We exemplify,
for the element $(4,4,6)$:

\[
\begin{array}{rl}
(4,4,6)= & 6.(2,0,1)+1.\left(0,4,0\right),\\
(4,4,6)= & 6.(2,0,1)+3.\left(0,4,0\right),\\
(4,4,6)= & 6.(2,0,1)+5.\left(0,4,0\right),\\
(4,4,6)= & 6.(2,0,1)+7.\left(0,4,0\right).
\end{array}
\]

We also observe that $\beta$ is not an independent set because,

$$
\begin{array}{ccl}
0.\left(2,0,1\right)+2\left(0,4,0\right) & = & 0.\left(2,0,1\right)+4\left(0,4,0\right) \\
& = & 0.\left(2,0,1\right)+6\left(0,4,0\right) \\
& = & (0,0,0).
\end{array}
$$

\section{Preliminary.}

Let $\langle \ , \ \rangle : Z_{p^r}^n \times Z_{p^r}^n \longrightarrow Z_{p^r}$ be the {\it symmetric bilinear form} on the space $Z_{p^r}^n$ defined by
$$\langle u , v \rangle =\sum_{i=1}^nx_iy_i,$$ where $u=(x_1,\dots,x_n)$ and $v=(y_1,\dots,y_n).$

Let $S=\left\{ v_{1},v_{2},\dots,v_{k}\right\} $ be $k$ vectors in
$\mathbb{Z}_{p^{r}}^{n}$. The span of $S$ is denoted by $M$, i.e., $M=span(S)$. The {\it dual} of $M$, denoted by $M^\perp$, is given by
$$M^\perp=\left\{v \in  \mathbb{Z}_{p^{r}}^{n}; \langle u,v\rangle=0, \ \forall u \in M\right\}.$$


The vector $\underset{j=1}{\overset{k}{\sum}}a_{j}v_{j}$ is
called a \emph{$p$-linear combination} of $S=\{v_{1},\dots,v_{k}\}$, where \linebreak$a_j\in\mathcal{A}_{p}=\left\{ 0,1,2,\dots,p-1\right\} \subset\mathbb{Z}_{p^{r}}$. The set of all $p$-linear combinations of $S$ is called {\it $p$-span(S)}.

An ordered set $\left(v_{1},v_{2},\dots,v_{k}\right)$ in $\mathbb{Z}_{p^{r}}^{n}$
is a {\it $p$-generator sequence}  if 
\begin{description}
\item [{i)}] $pv_{k}=0$;
\item [{ii)}] for $1\leq i\leq k-1$, the vector $pv_{i}$ is a $p$-linear
combination of $v_{i+1},\dots,v_{k}$.
\end{description}

A family $v_1,v_2, \dots, v_k$ of $\mathbb{Z}_{p^r}^n$ is said to be {\it $p$-linearly independent}, if for \linebreak $a_1, a_2, \dots, a_k\in \mathcal{A}_p$, then $$\sum_{j=1}^ka_jv_j=0 \Leftrightarrow a_j=0, \ \forall j=1\dots k.$$

\begin{theorem}[\cite{vazirani}]\label{teo0}
Let $( v_{1},v_{2},\dots,v_{k}) $ be a $p$-generator sequence in $\mathbb{Z}_{p^r}^n.$ Then $$p{\textrm -span}(v_{1},v_{2},\dots,v_{k}) = {\textrm span}(v_{1},v_{2},\dots,v_{k}).$$ In particular, $p$-span$(v_{1},v_{2},\dots,v_{k})$ is a submodule of $\mathbb{Z}_{p^r}^n.$
\end{theorem}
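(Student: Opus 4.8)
The easy half is that $p\text{-span}(v_1,\dots,v_k)\subseteq\text{span}(v_1,\dots,v_k)$, since $\mathcal{A}_p\subseteq\mathbb{Z}_{p^r}$ and so every $p$-linear combination is in particular a $\mathbb{Z}_{p^r}$-linear combination. For the reverse inclusion my plan is to show that $p\text{-span}(v_1,\dots,v_k)$ is a submodule of $\mathbb{Z}_{p^r}^n$. Granting this, since the set contains each $v_j$ (take $a_j=1$ and the remaining coefficients $0$) and $\text{span}(v_1,\dots,v_k)$ is by definition the smallest submodule containing $v_1,\dots,v_k$, we get $\text{span}(v_1,\dots,v_k)\subseteq p\text{-span}(v_1,\dots,v_k)$, hence equality; the last assertion of the theorem is then automatic, because a span is always a submodule.

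Next I would reduce the submodule claim to a single statement about addition. Clearly $0\in p\text{-span}(v_1,\dots,v_k)$; moreover, for $c\in\{1,\dots,p^r-1\}$ and any $u$ in the set one has $c\,u=u+\cdots+u$ ($c$ copies) and $-u=(p^r-1)u$, so once the set is closed under addition it is closed under $\mathbb{Z}_{p^r}$-scaling and negatives as well. Finally, any element of $p\text{-span}(v_1,\dots,v_k)$ is by definition a finite sum of the $v_j$'s with repetitions, so closure under addition will follow from the key step: \emph{if $w\in p\text{-span}(v_1,\dots,v_k)$ then $w+v_i\in p\text{-span}(v_1,\dots,v_k)$ for every $i$.}

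I expect this key step to be the crux, and I would prove it by downward induction on $i$, from $i=k$ to $i=1$. Write $w=\sum_{j=1}^k a_jv_j$ with all $a_j\in\mathcal{A}_p$. If $a_i\le p-2$, just replace $a_i$ by $a_i+1\in\mathcal{A}_p$ and there is nothing more to do. If $a_i=p-1$, the coefficient of $v_i$ in $w+v_i$ becomes $p$, i.e.\ that term equals $pv_i$. For the base case $i=k$, condition (i) gives $pv_k=0$, so $w+v_k=\sum_{j<k}a_jv_j$ is already a $p$-linear combination. For $i<k$, condition (ii) gives $pv_i=\sum_{j>i}b_jv_j$ with $b_j\in\mathcal{A}_p$, whence
\[
w+v_i=\Big(\sum_{j<i}a_jv_j+0\cdot v_i+\sum_{j>i}a_jv_j\Big)+\sum_{j>i}b_jv_j;
\]
the first summand is a $p$-linear combination, and by the induction hypothesis (adding any single $v_j$ with $j>i$ keeps us in the set, hence so does adding any $p$-linear combination of $v_{i+1},\dots,v_k$) we may add $\sum_{j>i}b_jv_j$ to it without leaving $p\text{-span}(v_1,\dots,v_k)$. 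This closes the induction.

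The main obstacle is exactly the ``carry'' that appears here: adding two $p$-linear combinations coefficientwise yields coefficients as large as $2p-2$, and collapsing them modulo $p$ forces extra contributions onto the later vectors through condition (ii), which could a priori cascade. The point of the $p$-generator ordering is that every carry is pushed strictly to the right, so the process terminates; the downward induction above is simply a clean bookkeeping device for that termination. Alternatively, one could argue directly on an arbitrary $\sum_j c_jv_j$ with $c_j\in\mathbb{Z}$ by processing $j=1,\dots,k$ in order, at stage $j$ writing the current coefficient of $v_j$ as $q_jp+a_j$ with $a_j\in\mathcal{A}_p$, keeping $a_j$ and, using $pv_j=\sum_{\ell>j}b^{(j)}_\ell v_\ell$, adding $q_jb^{(j)}_\ell$ to the coefficient of each $v_\ell$ with $\ell>j$; each such step leaves the vector unchanged, and after stage $k$ (where $pv_k=0$ absorbs the last carry) all coefficients lie in $\mathcal{A}_p$, exhibiting the vector as a $p$-linear combination.
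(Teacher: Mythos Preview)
Your argument is correct. The downward induction on $i$ handles the carry propagation cleanly: the key point, which you identify, is that the $p$-generator ordering pushes every overflow strictly to the right, so the process terminates. The reduction of the submodule property to closure under adding a single $v_i$ is also sound, since $\mathbb{Z}_{p^r}$ is finite and so scaling is iterated addition.

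As for comparison: the paper does not actually prove this theorem---it is quoted from \cite{vazirani}---but it does give an illustrative Example immediately afterward working out the case $k=2$. That example takes the direct route: start with $v=av_1+bv_2$ for arbitrary $a,b\in\mathbb{Z}_{p^r}$, write the $p$-adic expansions $a=\sum a_ip^i$, $b=\sum b_ip^i$, and use $pv_1=\gamma v_2$, $pv_2=0$ to collapse the higher-order terms, then repeat the $p$-adic reduction on the resulting coefficient of $v_2$. This is exactly the spirit of the \emph{alternative} you sketch at the end (process $j=1,\dots,k$ in order, reduce the current coefficient modulo $p$, and push the quotient forward via condition (ii)). Your primary proof is organized differently---you prove closure of $p\text{-span}$ under addition rather than directly rewriting an arbitrary $\mathbb{Z}_{p^r}$-combination---but the underlying mechanism (carries propagate rightward and are eventually killed by $pv_k=0$) is the same. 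Your version has the mild advantage of making the submodule structure the centerpiece, so the ``In particular'' clause is not an afterthought but the engine of the argument.
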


\begin{example}
Let $(v_1,v_2)$ be a $p$-generator sequence in $\mathbb{Z}_{p^r}^n,$ then \begin{equation}{\label{gerador}}
\begin{array}{l}
pv_1=\gamma v_2, \ \textrm{ with }  \gamma \in \mathcal{A}_p, \\
 \\
  pv_2=0.
\end{array}
\end{equation}

 Let $v \in span(v_1,v_2)$, then $v=av_1+bv_2$ with $a, b \in \mathbb{Z}_{p^r}$. As each element of $\mathbb{Z}_{p^r}$ has a unique $p$-adic decomposition, it follows that $$\begin{array}{l}
 a=a_0+a_1p+\cdots+a_{r-1}p^{r-1} \ \textrm{ and }\\ \\ 
 b=b_0+b_1p+\cdots+p_{r-1}p^{r-1},
 \end{array}$$ where $a_i, b_i \in \mathcal{A}_p$, for $i=0,\cdots,r-1$. Thus, by   \eqref{gerador}
 
 $$\begin{array}{ll}
  av_1&=a_0v_1+a_1pv_1+\cdots+a_{r-1}p^{r-1}v_1\\
  \\
  &=a_0v_1+a_1pv_1=a_0v_1+a_1\gamma v_2, \ \textrm{ and }\\
  \\
bv_2&=b_0v_2+b_1pv_2+\cdots+p_{r-1}p^{r-1}v_2=b_0v_2.
 \end{array}$$
 
Therefore, $$v=(a_0v_1+a_1\gamma v_2)+b_0v_2=a_0v_1+(a_1\gamma+b_0)v_2.$$ Using again the $p$-adic decomposition of $c=a_1\gamma+b_0 \in \mathbb{Z}_{p^r}$ and the equations \eqref{gerador}, it follows that $$c=c_0+c_1p+\cdots+c_{r-1}p^{r-1},$$ where $c_i \in \mathcal{A}_p$. Thus $v=a_0v_1+c_0v_2,$ with $a_0, c_0$ in $\mathcal{A}_p$ and $v \in p$-$span(v_1,v_2).$
\end{example}

A $p$-linearly independent $p$-generator sequence will be called a {\it $p$-basis}. Notice that the $p$-linear combinations of the elements of a $p$-basis$(v_1,\dots, v_k)$ uniquely generate the elements of the submodule $M=p$-span($v_1,\dots, v_k$). Otherwise, the null vector should be a non-trivial $p$-linear combination. Then, $|M|=p^k$, where $|\cdot|$ denotes the number of elements of a set.  We will define the {$p$-dimension} of this submodule as $k$.

Let $M$ be a submodule of $\mathbb{Z}_{p^{r}}^{n}$. Consider 
$$Soc(M)=\left\{v \in  M; pv=0\right\}.$$

A generator matrix  $G$ for a submodule $M$ with length $n$ is said to be in {\it standard form} if,  after a suitable permutation of the coordinates, 

$$G=\begin{bmatrix}
   I_{k_0} & A_{01} & A_{02} & \dots  & A_{0,r-1} & A_{0,r}\\
    0 & \gamma I_{k_1}  & \gamma A_{12} & \dots  & x_{2n} & \gamma A_{1,r} \\
    0 & 0 & \gamma^2I_{k_2} & \dots  & x_{2n} & \gamma^2 A_{2,r} \\
    \vdots & \vdots & \vdots & \ddots & \vdots & \vdots\\
    0 & 0 & 0 & \dots  & \gamma^{r-1}I_{k_{r-1}} & \gamma^{r-1}A_{r-1,r}
\end{bmatrix},
$$ where $I_k$ denotes the $k\times k$ identidy matrix and the columns are grouped into blocks of sizes $k_0, k_1, \cdots, k_{r-1}, n-\sum_{i=0}^{r-1}k_i$ with $k_i \geq 0.$

\begin{theorem}[\cite{norton,norton2}] \label{teo1} Any submodule $M$ of $\mathbb{Z}_{p^r}^n$ has a generator matrix in standard form. All generator matrices in standard form for a submodule $M$ have the same parameters $k_0, \cdots, k_{r-1}$ and $|M|=\left|K\right|^{\sum_{i=0}^{r-1}\left(r-i\right)k_{i}\left(M\right)}$, where $K$ is the residual field of $\mathbb{Z}_{p^r}$. 
\end{theorem}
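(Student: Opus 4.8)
The plan is to prove the three assertions separately, using throughout that $\mathbb{Z}_{p^r}$ is a finite chain ring: its ideals form the chain $(1)\supset(p)\supset\cdots\supset(p^{r-1})\supset(0)$, the residue field is $K=\mathbb{Z}_{p^r}/(p)$ with $|K|=p$, and if $a,b\in\mathbb{Z}_{p^r}$ with the $p$-adic valuation of $a$ at most that of $b$, then $a\mid b$. In the standard form $\gamma$ is a generator of the maximal ideal, which we may take to be $p$ itself, so $\gamma^i$ has valuation $i$ and $(\gamma^{r-i})$ has exactly $p^{i}$ elements.

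\emph{Existence.} Starting from any finite generating set of $M$, regarded as the rows of a matrix, I would perform a staircase elimination using only operations that preserve the row span (scaling a row by a unit, adding a multiple of one row to another) together with permutations of rows and of columns; column \emph{permutations} are allowed because they amount to permuting the coordinates of $\mathbb{Z}_{p^r}^n$, but column \emph{additions} are not, since they would change $M$. Process pivots by increasing valuation $s=0,1,\dots,r-1$. At stage $s$, choose any entry of valuation $s$ in the not-yet-pivoted part; by the divisibility remark it divides every entry of its column that still lies in that part, so after scaling its row by a unit it becomes $\gamma^s$ and one clears every entry in its column that is divisible by $\gamma^s$ — in particular all the not-yet-pivoted ones and all those in rows whose pivot also has valuation $s$. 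Iterating until no entry of valuation $s$ survives in the active part leaves the active entries of valuation $\ge s+1$, so after stage $r-1$ the active part vanishes. Deleting zero rows and reordering rows and columns so that the pivot columns come first, grouped by the valuation of their pivots, gives exactly the displayed shape: in the $i$-th row block, a column whose pivot has valuation $j<i$ was cleared in that row already at stage $j$ (its entry was then active, hence of valuation $\ge j$), the $k_i$ pivot columns of that block carry $\gamma^i I_{k_i}$, and the remaining columns keep arbitrary entries of valuation $\ge i$, which we record as $\gamma^i A_{i,j}$.

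\emph{Cardinality.} Fix a standard-form matrix $G$ for $M$ with parameters $k_0,\dots,k_{r-1}$, put $m=\sum_i k_i$, and let $\phi\colon\mathbb{Z}_{p^r}^m\to M$ send a coefficient vector to the corresponding combination of the rows of $G$, so that $|M|=|\operatorname{Im}\phi|=p^{rm}/|\ker\phi|$. Peeling off column blocks from the left identifies $\ker\phi$: restricting a combination to column block $0$ returns its block-$0$ coefficients, which must therefore vanish; then restricting to column block $1$ returns $(\gamma a^{(1)}_t)_t$, forcing each block-$1$ coefficient into $(\gamma^{r-1})$; the block-$1$ contributions to column block $2$ now vanish, so the same argument forces the block-$2$ coefficients into $(\gamma^{r-2})$; and so on. Conversely a coefficient vector whose block-$i$ entries lie in $(\gamma^{r-i})$ is killed by $\phi$, because the rows of block $i$ have all entries in $(\gamma^i)$. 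Hence $|\ker\phi|=\prod_{i=0}^{r-1}|(\gamma^{r-i})|^{k_i}=\prod_{i=0}^{r-1}p^{ik_i}$, and
\[
|M|=p^{\,rm-\sum_i i k_i}=p^{\sum_{i=0}^{r-1}(r-i)k_i}=|K|^{\sum_{i=0}^{r-1}(r-i)k_i}.
\]

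\emph{Uniqueness of the parameters, and the main obstacle.} To see the $k_i$ are invariants of $M$, note that multiplying $G$ by $\gamma^j$ annihilates the row blocks with $i+j\ge r$ and converts block $i$ (for $i\le r-1-j$) into a block with pivot $\gamma^{i+j}$; after reordering this is a standard-form matrix for $p^jM$, with parameters $k'_{i+j}=k_i$ and the rest zero. Applying the cardinality formula to $p^jM$ gives $\log_p|p^jM|=e_j$ with $e_j=\sum_{i=0}^{r-1-j}(r-i-j)k_i$, and a short computation gives $e_j-e_{j+1}=\sum_{i=0}^{r-1-j}k_i$ (setting $e_r=0$); since the numbers $|p^jM|$ depend only on $M$, so do the $e_j$, and hence so do $k_{r-1-j}=(e_j-e_{j+1})-(e_{j+1}-e_{j+2})$ for $j=0,\dots,r-1$. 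Thus any two standard-form matrices for $M$ have the same parameters. The one genuinely delicate point is the existence step: one must verify that the staircase procedure, clearing a pivot column only in those rows where the pivot truly divides the entry and never using column additions, terminates and lands precisely on the prescribed block pattern rather than merely on some echelon-type form; the other two parts are then bookkeeping with the filtration $M\supseteq pM\supseteq\cdots\supseteq p^{r-1}M\supseteq 0$ and its quotients, which are $K$-vector spaces.
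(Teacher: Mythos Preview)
The paper does not supply its own proof of this theorem: it is quoted verbatim from Norton and S\u{a}l\u{a}gean \cite{norton,norton2} as a preliminary result and used as a black box in the subsequent arguments. There is therefore nothing in the paper to compare your proposal against.

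That said, your proposal is essentially the standard proof one finds in the cited references, and the three parts are sound. The cardinality computation via $|\ker\phi|$ is clean and correct, and your invariance argument via the filtration $M\supseteq pM\supseteq\cdots$ is exactly the right idea (indeed $e_j-2e_{j+1}+e_{j+2}=k_{r-1-j}$ recovers each parameter from module-theoretic data). For the existence step your sketch is accurate, including the subtlety you flag: one must keep track of which entries are guaranteed to be divisible by the current pivot so that the clearing operations are legal, and one must check that clearing a stage-$s$ pivot column in rows already assigned a stage-$s$ pivot does not disturb the zeros produced at earlier stages (it does not, since the pivot row being subtracted already has zeros in those earlier pivot columns). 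One small point worth making explicit is that the displayed standard form does \emph{not} require clearing a pivot column above its own row block---the blocks $A_{0,1},A_{0,2},\dots$ and more generally $A_{i,j}$ for $i<j$ are arbitrary---so your staircase, which only clears downward and within the current valuation block, lands on the correct shape without any upward elimination.
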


Given a submodule $M$ of $\mathbb{Z}_{p^r}^n$, the number of rows of a generator matrix $G$ in standard form for $M$ is denoted by $k(M)$ and for $i=0,\cdots,r-1$, $k_i(M)$ denotes the number of rows of $G$ that are divisible by $\gamma^i$ but not by $\gamma^{i+1}.$ Then, $k(M)=\sum_{i=0}^{r-1}k_i(M).$

\begin{theorem}[\cite{norton,norton2}] \label{teo2} Let $M$ be a submodule of $\mathbb{Z}_{p^r}^n$ with generator matrix $G$ in standard form. Then, $k(M^\perp)=n-k_0(M), k_0(M^\perp)=n-k(M),$ and $k_i(M^\perp)=k_{r-i}(M),$ for $i=1, \cdots, r-1.$
\end{theorem}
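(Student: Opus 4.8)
The plan is to build, directly from the standard form of $G$, an explicit generator matrix $H$ for $M^{\perp}$ which is again in standard form, and then to read the three claimed identities off the sizes of the column blocks of $H$. Throughout I identify $\gamma$ with the prime $p$, so that for $x\in\mathbb{Z}_{p^{r}}$ one has $\gamma^{j}x=0$ exactly when $x\in p^{\,r-j}\mathbb{Z}_{p^{r}}$, a subgroup of order $p^{\,j}$.

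First, since $M=\mathrm{span}(\text{rows of }G)$ and $\langle\,\cdot\,,\cdot\,\rangle$ is bilinear, a vector $v$ lies in $M^{\perp}$ if and only if $Gv^{T}=0$. Decompose $v=(v^{(0)},v^{(1)},\dots,v^{(r-1)},v^{(r)})$ along the column blocks of $G$, of sizes $k_{0},k_{1},\dots,k_{r-1}$ and $k_{r}:=n-\sum_{i=0}^{r-1}k_{i}$. The $j$-th row block of $G$ is $\gamma^{j}$ times $[\,0\cdots0\mid I_{k_{j}}\mid A_{j,j+1}\cdots A_{j,r}\,]$, so $Gv^{T}=0$ is equivalent to the congruences $(v^{(j)})^{T}\equiv-\sum_{l>j}A_{jl}(v^{(l)})^{T}\pmod{p^{\,r-j}}$ for $j=0,\dots,r-1$. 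Hence a solution is obtained by choosing $v^{(r)}\in\mathbb{Z}_{p^{r}}^{k_{r}}$ freely and then, descending through $j=r-1,\dots,0$, choosing $v^{(j)}$ in a coset of $p^{\,r-j}\mathbb{Z}_{p^{r}}^{k_{j}}$ prescribed by the $v^{(l)}$ with $l>j$; the number of free choices at step $j$ is $(p^{\,j})^{k_{j}}$ and $v^{(0)}$ is uniquely determined, so $|M^{\perp}|=p^{\,rk_{r}+\sum_{j=1}^{r-1}jk_{j}}$. Together with $|M|=p^{\sum_{i=0}^{r-1}(r-i)k_{i}(M)}$ from Theorem \ref{teo1}, this already gives $|M|\,|M^{\perp}|=p^{\,rn}$.

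Next I would turn this parametrization into generators. For $i=1,\dots,k_{r}$ take as a row the solution with $v^{(r)}=e_{i}$ and the lower blocks equal to their forced representatives; these $k_{r}$ rows yield a block with $I_{k_{r}}$ in column block $r$ and arbitrary entries in the blocks $0,\dots,r-1$. For $j=r-1,\dots,1$ and $i=1,\dots,k_{j}$ take the solution with $v^{(l)}=0$ for $l>j$, $v^{(j)}=p^{\,r-j}e_{i}$, and lower blocks their forced representatives; since $v^{(l)}=0$ for $l>j$, those lower blocks are themselves multiples of $p^{\,r-j}$, so these $k_{j}$ rows form $p^{\,r-j}$ times a block with $I_{k_{j}}$ in column block $j$. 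Assembling all these rows into a matrix $H$ and permuting the column blocks into the order $(r,r-1,\dots,1,0)$ displays $H$ in standard form, with diagonal identity blocks $I_{k_{r}},\gamma I_{k_{r-1}},\gamma^{2}I_{k_{r-2}},\dots,\gamma^{r-1}I_{k_{1}}$ and a final free column block of size $k_{0}$. By construction the rows of $H$ lie in $M^{\perp}$, and by Theorem \ref{teo1} the order of $\mathrm{span}(H)$ equals $p^{\sum_{i=0}^{r-1}(r-i)k_{i}(\mathrm{span}(H))}$, which is exactly the value of $|M^{\perp}|$ found above; hence $\mathrm{span}(H)=M^{\perp}$. (Alternatively one shows $\mathrm{span}(H)=M^{\perp}$ by starting from an arbitrary $v\in M^{\perp}$, eliminating its block-$r$ component with the first $k_{r}$ rows, then its block-$(r-1)$ component, and so on downward.)

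Finally, reading off the block sizes of $H$ gives $k_{0}(M^{\perp})=k_{r}=n-\sum_{i=0}^{r-1}k_{i}(M)=n-k(M)$ and $k_{i}(M^{\perp})=k_{r-i}(M)$ for $i=1,\dots,r-1$, where the block $\gamma^{i}I_{k_{r-i}}$ of $H$ contributes rows divisible by $\gamma^{i}$ but not by $\gamma^{i+1}$, non-divisibility by $\gamma^{i+1}$ holding precisely because the leading block is an identity matrix. Summing, $k(M^{\perp})=k_{r}+\sum_{i=1}^{r-1}k_{r-i}(M)=n-k_{0}(M)$, as claimed. The step I expect to require the most care is verifying that the rows constructed above really do assemble into the standard form after the column permutation: that each generator's forced lower blocks are divisible by the appropriate power of $p$, that its leading block is exactly an identity, and that these rows span all of $M^{\perp}$ rather than a proper submodule.
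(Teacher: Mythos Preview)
Your argument is correct and follows the standard construction of a parity-check matrix in standard form, exactly as in the Norton--S\u{a}l\u{a}gean references cited. Note, however, that the paper itself does not prove this statement: Theorem~\ref{teo2} is quoted from \cite{norton,norton2} without proof and used only as an input to later results (Lemma~\ref{prop:cardinalidade} in particular). So there is no ``paper's own proof'' to compare against; you have supplied a self-contained proof where the paper simply invokes the literature. Your construction of $H$ and the back-substitution showing that the forced lower blocks are divisible by $p^{\,r-j}$ are the key points, and you have identified them correctly; one small redundancy is that the cardinality computation $|M|\,|M^{\perp}|=p^{rn}$ in your second paragraph is precisely what the paper derives afterward as Lemma~\ref{prop:cardinalidade} from this theorem, so you may wish to present it as a corollary rather than an intermediate step.
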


%
%

\section{Results.}

\begin{lemma}
\label{prop:cardinalidade} Let $M$ be a submodule of $\mathbb{Z}_{p^{r}}^{n}$. Then $$\left|M\right|\left|M^{\perp}\right|=\left|\mathbb{Z}_{p^{r}}^{n}\right|=p^{r\cdot n}.$$
\end{lemma}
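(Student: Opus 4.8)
The plan is to translate the cardinality question into the numerical invariants $k_i(\cdot)$ supplied by Theorems \ref{teo1} and \ref{teo2}. First I would note that $M^{\perp}$ is itself a submodule of $\mathbb{Z}_{p^{r}}^{n}$ (indeed Theorem \ref{teo2} already presupposes a standard‑form generator matrix for it), so Theorem \ref{teo1} applies to both $M$ and $M^{\perp}$. Since the residual field $K=\mathbb{Z}_{p^{r}}/p\mathbb{Z}_{p^{r}}$ has $|K|=p$ elements, this yields
\begin{equation*}
|M| = p^{\sum_{i=0}^{r-1}(r-i)k_{i}(M)}, \qquad |M^{\perp}| = p^{\sum_{i=0}^{r-1}(r-i)k_{i}(M^{\perp})},
\end{equation*}
so it suffices to show that the two exponents sum to $rn$.

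Next I would substitute the relations of Theorem \ref{teo2}, namely $k_{0}(M^{\perp})=n-k(M)$ with $k(M)=\sum_{j=0}^{r-1}k_{j}(M)$, and $k_{i}(M^{\perp})=k_{r-i}(M)$ for $1\le i\le r-1$, into the exponent of $|M^{\perp}|$. Splitting off the $i=0$ term gives
\begin{equation*}
\sum_{i=0}^{r-1}(r-i)k_{i}(M^{\perp}) = r\Bigl(n-\sum_{j=0}^{r-1}k_{j}(M)\Bigr) + \sum_{i=1}^{r-1}(r-i)k_{r-i}(M).
\end{equation*}
Re-indexing the last sum by $j=r-i$ (so that $r-i=j$ runs over $1,\dots,r-1$) turns it into $\sum_{j=1}^{r-1}j\,k_{j}(M)=\sum_{j=0}^{r-1}j\,k_{j}(M)$, and collecting terms produces
\begin{equation*}
\sum_{i=0}^{r-1}(r-i)k_{i}(M^{\perp}) = rn - \sum_{j=0}^{r-1}(r-j)k_{j}(M),
\end{equation*}
which is precisely $rn$ minus the exponent of $|M|$. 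Therefore $|M|\,|M^{\perp}| = p^{rn} = |\mathbb{Z}_{p^{r}}^{n}|$, as claimed.

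The whole argument is a bookkeeping computation once Theorems \ref{teo1} and \ref{teo2} are invoked; the only delicate point is the re-indexing $j=r-i$ together with the correct handling of the boundary terms ($i=0$, resp.\ $j=0$), which is what makes the weight $(r-i)$ attached to $k_{i}(M^{\perp})$ convert into the complementary weight on the $k_{j}(M)$. I do not anticipate any genuine obstacle beyond this.
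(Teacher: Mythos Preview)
Your proposal is correct and follows essentially the same route as the paper: both arguments apply Theorem~\ref{teo1} to express $|M|$ and $|M^{\perp}|$ as powers of $|K|$, then substitute the relations $k_0(M^{\perp})=n-k(M)$ and $k_i(M^{\perp})=k_{r-i}(M)$ from Theorem~\ref{teo2} and check that the exponents add to $rn$. Your re-indexing $j=r-i$ is exactly the step the paper performs (somewhat more tersely) when passing from $\sum_{i=1}^{r-1}(r-i)k_{r-i}(M)-r\sum_{i=0}^{r-1}k_i(M)$ to $-\sum_{i=0}^{r-1}(r-i)k_i(M)$.
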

\begin{proof}
By Theorem \ref{teo1}, $\mbox{\ensuremath{\left|M\right|}}=\left|K\right|^{\sum_{i=0}^{r-1}\left(r-i\right)k_{i}\left(M\right)}$.
By Theorem \ref{teo2}, $k_{0}\left(M^{\perp}\right)=n-k\left(M\right)$ and
$k_{i}\left(M^{\perp}\right)=k_{r-i}\left(M\right)$ for $i=1,2,\dots,r-1$.
We observe that $k\left(M\right)=\sum_{i=0}^{r-1}k_{i}\left(M\right)$.

Now 

$\begin{array}{rl}
\displaystyle\sum_{i=0}^{r-1}\left(r-i\right)k_{i}\left(M^{\perp}\right) & =\displaystyle\sum_{i=1}^{r-1}\left(r-i\right)k_{i}\left(M^{\perp}\right)+rk_{0}\left(M^{\perp}\right)\\
 & =\displaystyle\sum_{i=1}^{r-1}\left(r-i\right)k_{i}\left(M^{\perp}\right)+r\left(n-k\left(M\right)\right)\\
 & =\displaystyle\sum_{i=1}^{r-1}\left(r-i\right)k_{r-i}\left(M\right)+r\left(n-\displaystyle\sum_{i=0}^{r-1}k_{i}\left(M\right)\right)\\
 & =rn+\displaystyle\sum_{i=1}^{r-1}\left(r-i\right)k_{r-i}\left(M\right)-r\left(\displaystyle\sum_{i=0}^{r-1}k_{i}\left(M\right)\right)\\
 & =rn-\displaystyle\sum_{i=0}^{r-1}\left(r-i\right)k_{i}\left(M\right).
\end{array}$

And those 
\[
\begin{array}{rll}
\left|M^{\perp}\right|&=\left|K\right|^{\sum_{i=0}^{r-1}\left(r-i\right)k_{i}\left(M^{\perp}\right)}&=\left|K\right|^{rn-\sum_{i=0}^{r-1}\left(r-i\right)k_{i}\left(M\right)}\\
&=\dfrac{\left|K\right|^{rn}}{\left|K\right|^{\sum_{i=0}^{r-1}\left(r-i\right)k_{i}\left(M\right)}}&=\dfrac{\left|\mathbb{Z}_{p^{r}}^{n}\right|}{\left|M\right|}.
\end{array}
\]
\end{proof}
\begin{theorem}
Let $M$ be a submodule of $\mathbb{Z}_{p^r}^n$. Then
\[
p\mbox{-}dim\left(M\right)+p\mbox{-}dim\left(M^{\perp}\right)=p\mbox{-}dim\left(\mathbb{Z}_{p^{r}}^{n}\right).
\]
\end{theorem}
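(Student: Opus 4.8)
The plan is to reduce everything to a computation with the $p$-dimension in terms of the standard-form parameters $k_i$, and then invoke Lemma \ref{prop:cardinalidade}. First I would establish the link between $p$-dimension and cardinality: if $M$ has $p$-dimension $d$, then $|M| = p^{d}$, because a $p$-basis of size $d$ produces exactly $p^d$ distinct $p$-linear combinations (uniqueness follows from $p$-linear independence, as noted in the discussion preceding the definition of $p$-dimension). For this to be used, I need to know that every submodule $M$ actually has a $p$-basis, i.e.\ a $p$-linearly independent $p$-generator sequence; this should follow by reading off the rows of a standard-form generator matrix (Theorem \ref{teo1}) and, if necessary, expanding each row of "level" $i$ into $r-i$ vectors scaled by successive powers of $\gamma$, so that one obtains a genuine $p$-generator sequence of length $\sum_{i=0}^{r-1}(r-i)k_i(M)$. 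Thus $p\text{-}dim(M) = \sum_{i=0}^{r-1}(r-i)k_i(M)$ and $|M| = p^{\,p\text{-}dim(M)}$.

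Given that, the theorem is immediate: by Lemma \ref{prop:cardinalidade},
\[
p^{\,p\text{-}dim(M)} \cdot p^{\,p\text{-}dim(M^{\perp})} = |M|\,|M^{\perp}| = p^{r n} = p^{\,p\text{-}dim(\mathbb{Z}_{p^r}^n)},
\]
where the last equality uses that $\mathbb{Z}_{p^r}^n$ has the obvious $p$-basis consisting of $r$ scaled copies $p^{j}e_i$ of each standard basis vector, giving $p\text{-}dim(\mathbb{Z}_{p^r}^n) = rn$. Comparing exponents yields $p\text{-}dim(M) + p\text{-}dim(M^{\perp}) = p\text{-}dim(\mathbb{Z}_{p^r}^n)$.

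The main obstacle is the first step rather than the last: one must make sure the notion of $p$-dimension is well defined and genuinely equals $\sum_{i=0}^{r-1}(r-i)k_i(M)$. The well-definedness (independence of the chosen $p$-basis) comes for free once $|M| = p^{\,p\text{-}dim(M)}$ is known, since $|M|$ does not depend on any choice; the real work is exhibiting a $p$-basis from the standard form and checking that the expanded list of $\sum (r-i)k_i$ vectors is both a $p$-generator sequence (the conditions $pv_k=0$ and $pv_i \in p\text{-span}(v_{i+1},\dots)$ must be verified block by block, ordering the rows from the lowest power of $\gamma$ to the highest) and $p$-linearly independent. Once that bookkeeping is done, the count $|M| = |K|^{\sum(r-i)k_i(M)}$ from Theorem \ref{teo1} (with $|K| = p$) matches $p^{\,p\text{-}dim(M)}$, and the proof closes as above.
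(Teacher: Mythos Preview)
Your proposal is correct and follows essentially the same route as the paper: use $|M| = p^{\,p\text{-}dim(M)}$ together with Lemma~\ref{prop:cardinalidade} and compare exponents. The paper's proof is terser---it simply cites the identity $|M| = p^{\,p\text{-}dim(M)}$ from the preliminaries and does not spell out the existence of a $p$-basis from the standard form or the explicit $p$-basis of $\mathbb{Z}_{p^r}^n$---but the logical skeleton is the same.
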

\begin{proof}
We observe that $\left|M\right|=p^{p\mbox{-}dim\left(M\right)},\left|M^{\perp}\right|=p^{p\mbox{-}dim\left(M^{\perp}\right)}$
and $\left|\mathbb{Z}_{p^{r}}^{n}\right|=p^{r\cdot n}$. By Lemma
\ref{prop:cardinalidade}, we have $p^{p\mbox{-}dim\left(M^{\perp}\right)}=\left|M^{\perp}\right|=\dfrac{\left|\mathbb{Z}_{p^{r}}^{n}\right|}{\left|M\right|}=p^{r\cdot n-p\mbox{-}dim\left(M\right)}$.
Those $p\mbox{-}dim\left(M^{\perp}\right)=r\cdot n-p\mbox{-}dim\left(M\right)$.
\end{proof}
\begin{theorem}[Completion]\label{teo3}
Let $M$ and $N$ be submodules of $\mathbb{Z}_{p^r}^n$ with $N \subset M$,
$p$-$dim\left(M\right)=k$ and $p$-$dim\left(N\right)=t$. Then, each $p$-basis of $N$ can be extended to a $p$-basis of $M$.
\end{theorem}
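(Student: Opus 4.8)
The plan is to induct on $k - t = p\text{-}\dim(M) - p\text{-}\dim(N)$. The base case $k = t$ forces $N = M$ (since both are submodules with the same $p$-cardinality $p^{k} = p^{t}$ and $N \subseteq M$), so any $p$-basis of $N$ is already a $p$-basis of $M$ and there is nothing to extend. For the inductive step, it suffices to prove the following single-step claim: if $N \subsetneq M$ with $p\text{-}\dim(N) = t < k = p\text{-}\dim(M)$ and $(w_1, \dots, w_t)$ is a $p$-basis of $N$, then there exists a vector $v \in M$ such that prepending or appending $v$ appropriately yields a $p$-generator sequence which is still $p$-linearly independent, and whose $p$-span $N'$ satisfies $N \subsetneq N' \subseteq M$ with $p\text{-}\dim(N') = t+1$. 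Applying the inductive hypothesis to $N' \subseteq M$ (now with gap $k - t - 1$) then finishes the argument.

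The main work is locating the new vector $v$ and placing it correctly in the ordered sequence so that both defining properties of a $p$-generator sequence — namely $p v_k = 0$ and $p v_i \in p\text{-span}(v_{i+1}, \dots, v_k)$ — are preserved. The natural strategy: since $N \subsetneq M$, pick any $u \in M \setminus N$. Among all elements of $M \setminus N$, I would choose $v$ so that the largest power of $\gamma = p$ (up to $p^{r-1}$) dividing... more precisely, I would look at where $v$ must be inserted relative to the existing chain. One clean way is to exploit the standard-form generator matrix from Theorem~\ref{teo1}: a $p$-basis of $M$ of size $k = k(M)$ can be read off from the rows of $G$ (the $k_0$ rows giving $p$-independent generators of ``$\gamma^0$-type'', the next $k_1$ rows of ``$\gamma^1$-type'', etc.), and these rows, suitably ordered from top to bottom, form a $p$-generator sequence because $p$ times a $\gamma^i$-row lies in the span of the lower $\gamma^{i+1}$-type rows. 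So the real statement to prove is a compatibility: the given $p$-basis $(w_1,\dots,w_t)$ of $N$ can be interleaved into such a standard chain for $M$.

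Concretely, I would proceed as follows. Each $w_j$ has a well-defined ``level'' $\ell_j \in \{0, 1, \dots, r-1\}$, meaning $\gamma^{\ell_j} \mid w_j$ (coordinate-wise, after we situate things correctly) but $\gamma^{\ell_j + 1} \nmid w_j$; since $(w_1, \dots, w_t)$ is a $p$-generator sequence we may assume the levels are nondecreasing. Now build a $p$-basis of $M$ level by level: at each level $i$, start from the $w_j$'s of level $i$, and extend that $p$-independent set to a maximal $p$-independent set of level-$i$ vectors inside $M$ whose $p$-multiples fall into the span of everything already placed at higher levels — this extension is possible by a dimension count using $|M| = |K|^{\sum (r-i) k_i(M)}$ from Theorem~\ref{teo1} together with the analogous count for $N$, which guarantees that the $w_j$'s of level $i$ number at most $k_i(M)$ and hence leave room. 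Concatenating these extended blocks in order of increasing level yields a sequence containing $(w_1,\dots,w_t)$ as a subsequence, of length $k(M) = k$, that is $p$-linearly independent (no nontrivial relation, else one would descend through the levels to a contradiction) and satisfies conditions (i)–(ii) of a $p$-generator sequence by construction; thus it is a $p$-basis of $M$ extending the given one.

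The step I expect to be the main obstacle is the extension-within-a-level argument: showing that a $p$-independent set of level-$i$ vectors of $N$ that satisfies the $p$-generator ``descent'' condition can always be enlarged to a full level-$i$ block for $M$ without breaking $p$-independence of the whole assembled sequence. This requires carefully tracking how $p$-linear combinations interact across levels (a relation among vectors of several levels, multiplied through by $p$, shifts levels and must be shown to force all top-level coefficients to vanish), and it is where the cardinality formulas of Theorems~\ref{teo1} and~\ref{teo2} — rather than naive linear-algebra dimension counting — do the essential bookkeeping. Everything else (the induction wrapper, the base case, and reading a $p$-basis off a standard-form matrix) is routine.
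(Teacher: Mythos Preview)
Your induction wrapper is fine and matches the paper's iterative scheme, but the single-step construction you propose is both more complicated than needed and contains a real gap. The claim ``since $(w_1,\dots,w_t)$ is a $p$-generator sequence we may assume the levels are nondecreasing'' is false: in $\mathbb{Z}_8^2$ the ordered triple $((4,0),(0,2),(0,4))$ is a $p$-basis (check: $2\cdot(0,4)=0$, $2\cdot(0,2)=(0,4)$, $2\cdot(4,0)=0$, and the three vectors are $2$-linearly independent) with levels $2,1,2$. Since the theorem asks you to extend a \emph{given} $p$-basis of $N$, you are not free to reorder it, and reordering a $p$-generator sequence generally destroys property~(ii). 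Your level-by-level assembly, and the ``at most $k_i(M)$'' count that feeds it, both rest on this assumption.

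You actually had the right idea in your second paragraph and then abandoned it. The paper's argument is exactly the ``pick $u\in M\setminus N$ and look at powers of $p$'' move you started: given any $v_1\in M\setminus N$, let $m_1\ge 1$ be minimal with $p^{m_1}v_1\in N$ (this exists since $p^r v_1=0\in N$), and \emph{prepend} $w:=p^{m_1-1}v_1$ to the given $p$-basis $(u_1,\dots,u_t)$. Then $p\,w=p^{m_1}v_1\in N=p\text{-span}(u_1,\dots,u_t)$ by Theorem~\ref{teo0}, so condition~(ii) holds with $w$ in the first slot; and $w\notin N$ together with the fact that every nonzero $b\in\mathcal{A}_p$ is a unit in $\mathbb{Z}_{p^r}$ gives $p$-linear independence of $(w,u_1,\dots,u_t)$. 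Iterate. No standard form, no levels, no interleaving: the new vector always goes at the front, and the ``smallest $m$ with $p^m v\in N$'' trick is precisely what makes that placement legal.
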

\begin{proof}
If $M=N$, then the theorem occurs.

If $M\neq N$,  then for $\beta'=\left\{u_1, u_2, \cdots, u_t\right\}$ a $p$-basis of $N$,  there is $v_{1}\in M \setminus N,$ i. e., $v_1\in M$ and $v_1\notin N$. Let $m_1$ be the smallest integer different from $r$, such that $p^{m_1} v_1 \in N$. Thus, $p^{m_1-1}v_1 \not\in N$, i. e., $p^{m_1-1}v_1$ is not a $p$-linear combination of $\beta'.$ Therefore, the set $\beta_1=\left\{p^{m_1-1}v_1\right\}\cup \beta'$ is $p$-linearly independent. In fact, consider $b_1p^{m_1-1}v_1+a_1u_1+\cdots+a_tu_t=0$ with $b_1, a_i \in \mathcal{A}_p, i=1,\cdots,t$. If $b_1=0$, then $a_1u_1+\cdots+a_tu_t=0$ and as $\beta'$ is $p$-linearly independent, it follows that $a_i=0, i=1,\cdots,t,$ and therefore, $\beta_1$ is $p$-linearly independent. If $b_1\neq 0,$ then as $b_1 \in \mathcal{A}_p\setminus\{0\},$ it follows that $p^{m_1-1}v_1=-\left(\dfrac{a_1}{b_1}u_1+\cdots+\dfrac{a_t}{b_1}u_t\right) \in span(\beta')$. By Theorem \ref{teo1}, $span(\beta')$= $p$-$span(\beta')=N$, which is a contradiction, since $v_1 \not\in N.$ In all cases, $\beta_1$ is $p$-linearly independent.

Moreover,  as $p^{m_1}\cdot v_1 \in N$, it follows that $(p^{m_1-1}v_1,u_1, u_2, \cdots, u_t)$ is a $p$-generator sequence. Thus, $\beta_1$ is a $p$-basis.

If $t+1=k$, then $\beta_{1}$ is already a $p$-basis of $M$. Thus,
the proposition is held.

If $t+1<k,$ then there is $v_{2}\in M\setminus p$-$span(\beta_1)$. Let $m_2$ be  the smallest integer other than $r$, such that $p^{m_{2}}v_{2} \in p$-$span(\beta_1)$. By the same argument, $\beta_2=\left\{p^{m_2-1}v_2\right\}\cup\beta_1$ is a $p$-basis. If $t+2=k$, then $\beta_2$ is already a $p$-basis of $M$. Thus, the proposition is held.

Since $k<\infty$, this process is finite and we have the result.
\end{proof}

\begin{corollary}
Let $M$ be a submodule of $\mathbb{Z}_{p^r}^n$ with
$p$-$dim\left(M\right)=k$. Then for each $p$-basis of $Soc(M)$, there is a $p$-basis of $M$ that contains the $p$-basis of $Soc(M)$.
%
%
%
\end{corollary}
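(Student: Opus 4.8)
The plan is to observe that $Soc(M)$ is itself a submodule of $\mathbb{Z}_{p^r}^n$ contained in $M$, and then to invoke the Completion Theorem (Theorem~\ref{teo3}) with $N = Soc(M)$.

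First I would verify that $Soc(M) = \{v \in M : pv = 0\}$ is a submodule. Given $v, w \in Soc(M)$ and $a \in \mathbb{Z}_{p^{r}}$, the fact that $M$ is a submodule yields $v+w \in M$ and $av \in M$, while $p(v+w) = pv + pw = 0$ and $p(av) = a(pv) = 0$; hence $v+w, av \in Soc(M)$. Thus $Soc(M)$ is a submodule of $M$, and in particular $Soc(M) \subseteq M$. One may also note that since every $v \in Soc(M)$ satisfies $pv = 0$, an ordered $p$-linearly independent family generating $Soc(M)$ is automatically a $p$-generator sequence — conditions i) and ii) hold trivially, with all coefficients $0 \in \mathcal{A}_p$ — which is consistent with such a family being a $p$-basis.

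Then, given a $p$-basis $\beta_0$ of $Soc(M)$, I would apply Theorem~\ref{teo3} to the pair of submodules $Soc(M) \subseteq M$. The theorem produces a $p$-basis of $M$ extending $\beta_0$, which is exactly the assertion. The degenerate case $Soc(M) = M$ — which occurs precisely when $pM = 0$ — is already subsumed by the first line of the proof of Theorem~\ref{teo3}.

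I do not expect any genuine obstacle here: the corollary is a direct specialization of the Completion Theorem to the submodule $N = Soc(M)$, and the only point needing attention is the immediate verification that $Soc(M)$ is a submodule of $M$ so that Theorem~\ref{teo3} is applicable.
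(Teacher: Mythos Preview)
Your proposal is correct and follows exactly the paper's approach: the paper's proof consists of the single line ``Consider $N = Soc(M)$ in Theorem~\ref{teo3}.'' Your additional verification that $Soc(M)$ is a submodule (so that Theorem~\ref{teo3} applies) is a welcome detail the paper leaves implicit.
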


\begin{proof}
Consider $N=Soc(M)$ in Theorem \ref{teo3}.

\end{proof}

In the next result, we present a known result for vector spaces that uses the Theorem \ref{teo3} to calculate the dimension of submodules sum.

\begin{theorem}
If $M_1$ and $M_2$ are submodules of $\mathbb{Z}_{p^r}^n$, then $$p\mbox{-}dim(M_1+M_2)=p\mbox{-}dim(M_1)+p\mbox{-}dim(M_2)-p\mbox{-}dim(M_1\cap M_2).$$
\end{theorem}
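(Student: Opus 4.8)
The plan is to adapt the classical proof of the dimension formula for vector spaces, with the Completion theorem (Theorem~\ref{teo3}) playing the role of the usual basis-extension step. Set $s=p\mbox{-}dim(M_1\cap M_2)$, $k_1=p\mbox{-}dim(M_1)$, $k_2=p\mbox{-}dim(M_2)$, and fix a $p$-basis $(w_1,\dots,w_s)$ of the submodule $M_1\cap M_2$. Since $M_1\cap M_2\subseteq M_1$ and $M_1\cap M_2\subseteq M_2$, two applications of Theorem~\ref{teo3} (whose proof adjoins the new vectors in front, so that the given $p$-basis remains the tail) produce $p$-bases of $M_1$ and of $M_2$ of the respective forms
\[
(u_1,\dots,u_{k_1-s},w_1,\dots,w_s)\quad\text{and}\quad(v_1,\dots,v_{k_2-s},w_1,\dots,w_s).
\]
I claim the concatenation $\mathcal{B}=(u_1,\dots,u_{k_1-s},v_1,\dots,v_{k_2-s},w_1,\dots,w_s)$ is a $p$-basis of $M_1+M_2$; counting its $(k_1-s)+(k_2-s)+s=k_1+k_2-s$ entries then gives the stated identity.

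First I would check that $\mathcal{B}$, in this order, is a $p$-generator sequence. Indeed $pw_s=0$; for $l<s$, $pw_l$ is a $p$-linear combination of $w_{l+1},\dots,w_s$; each $pv_j$ is a $p$-linear combination of $v_{j+1},\dots,v_{k_2-s},w_1,\dots,w_s$; and each $pu_i$ is a $p$-linear combination of $u_{i+1},\dots,u_{k_1-s},w_1,\dots,w_s$ — in every case of vectors occurring later in $\mathcal{B}$ (pad with zero coefficients as needed). Next, by Theorem~\ref{teo0} the set $p\mbox{-}span(\mathcal{B})$ equals $span(\mathcal{B})$, and $span(\mathcal{B})=span(u_1,\dots,u_{k_1-s},w_1,\dots,w_s)+span(v_1,\dots,v_{k_2-s},w_1,\dots,w_s)=M_1+M_2$ (span is additive over unions, and by Theorem~\ref{teo0} each summand is the submodule spanned by the corresponding $p$-basis); hence $p\mbox{-}span(\mathcal{B})=M_1+M_2$.

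The crux is $p$-linear independence of $\mathcal{B}$. Suppose $\sum_i a_iu_i+\sum_j b_jv_j+\sum_l c_lw_l=0$ with all coefficients in $\mathcal{A}_p$, and put $z=\sum_i a_iu_i=-\big(\sum_j b_jv_j+\sum_l c_lw_l\big)$. The first expression gives $z\in M_1$ and the second gives $z\in M_2$ (using that $M_2$ is a submodule and $w_l\in M_1\cap M_2\subseteq M_2$), so $z\in M_1\cap M_2=p\mbox{-}span(w_1,\dots,w_s)$; write $z=\sum_l d_lw_l$ with $d_l\in\mathcal{A}_p$. Then $z=\sum_i a_iu_i+\sum_l 0\cdot w_l$ and $z=\sum_i 0\cdot u_i+\sum_l d_lw_l$ are two $p$-linear combinations of the $p$-basis $(u_1,\dots,u_{k_1-s},w_1,\dots,w_s)$ of $M_1$ representing the same vector; by uniqueness of such representations relative to a $p$-basis, $a_i=0$ for every $i$ (and $d_l=0$ for every $l$). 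The original relation then reduces to $\sum_j b_jv_j+\sum_l c_lw_l=0$, and $p$-linear independence of the $p$-basis of $M_2$ forces $b_j=0$ and $c_l=0$ for all $j,l$. Thus $\mathcal{B}$ is $p$-linearly independent, hence a $p$-basis of $M_1+M_2$, and the count above finishes the argument.

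I expect the independence step to be the only real obstacle: unlike over a field, one cannot divide or negate coefficients freely in $\mathcal{A}_p=\{0,1,\dots,p-1\}$, so the reasoning must go through uniqueness of $p$-linear-combination representations relative to a $p$-basis (available because a $p$-basis is $p$-linearly independent) rather than through linear-algebra cancellation. A minor point to keep honest is that the chosen concatenation order for $\mathcal{B}$ genuinely satisfies the $p$-generator-sequence axioms, since it is that order — not merely the underlying set — that makes $\mathcal{B}$ a $p$-basis; this is why it matters that Theorem~\ref{teo3} produces extensions keeping $(w_1,\dots,w_s)$ as a final segment.
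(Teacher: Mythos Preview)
Your argument is correct and follows the same strategy as the paper: extend a $p$-basis of $M_1\cap M_2$ to $p$-bases of $M_1$ and of $M_2$ via Theorem~\ref{teo3}, concatenate, and verify the result is a $p$-basis of $M_1+M_2$. You are in fact a bit more careful than the paper on two points --- you explicitly check the $p$-generator-sequence ordering for the concatenation (the paper leaves this implicit), and your independence step via uniqueness of $p$-basis representations sidesteps the paper's need to rewrite negated coefficients back into $\mathcal{A}_p$.
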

\begin{proof}
Let us suppose that $W_1\cap W_2 \neq \{\emptyset\}$ and let $\beta=\{w_1,...,w_n\}$ be a $p$-basis of $W_1\cap W_2$. Since $W_1\cap W_2$ is a submodule of $W_1$, by Theorem \ref{teo3}, there is a $p$-basis $\beta_1$ of $W_1$ that contains $\beta$. Thus, let $\beta_1=\{w_1,...,w_n,v_1,...,v_r\}$. Analogously, there is a $p$-basis $\beta_2=\{w_1,...,w_n,u_1,...,u_s\}$ of $W_2$. 

Using the Theorem \ref{teo0}, we will prove that $\gamma=\{w_1,...,w_n,v_1,...,v_r,u_1,...,u_s\}$ is a $p$-basis of $W_1+W_2$.

Let $v\in W_1+W_2$. Then $v=w_1+w_2$ with $w_1\in W_1$ and $w_2\in W_2$. Thus, there are $\alpha_1$,...,$\alpha_{n+r}$ and $\beta_1$,..., $\beta_{n+s}\in \mathcal{A}_p$ such that $$v=\sum_{i=1}^{n}\alpha_iw_i+\sum_{i=1}^{r}\alpha_{n+i}v_i + \sum_{i=1}^{n}\beta_iw_i+\sum_{i=1}^{s}\beta_{n+i}u_i.$$

By rearranging, we have $$ v=\sum_{i=1}^{n}(\alpha_i+\beta_i)w_i+\sum_{i=1}^{r}\alpha_{n+i}v_i + \sum_{i=1}^{s}\beta_{n+i}u_i.$$

Notice that $\alpha_i+\beta_i$ may not belong to $\mathcal{A}_p$. Using the Theorem \ref{teo0} again, there is $\delta_i \in \mathcal{A}_p$ such that  $\sum_{i=1}^n(\alpha_i+\beta_i)w_i=\sum_{i=1}^n\delta_iw_i$. Thus, $\gamma$ is a p-generator set of $W_1+W_2$. Let us prove that $\gamma$ is a $p$-linearly independent set. Consider the $p$-linear combination $$\sum_{i=1}^{n}\delta_iw_i+\sum_{i=1}^{r}\alpha_{n+i}v_i + \sum_{i=1}^{s}\beta_{n+i}u_i=0,$$ where $\delta_i,\alpha_i$ and $\beta_i\in A_p$. Thus $$\sum_{i=1}^{s}\beta_{n+i}u_i=\sum_{i=1}^{n}(-\delta_i)w_i+\sum_{i=1}^{r}(-\alpha_{n+i})v_i.$$ Notice that $-\delta_i$ and $-\alpha_i$ may not belong to $\mathcal{A}_p$, but as $p$-span($w_1$,...,$w_n$) is equal to span($w_1$,...,$w_n$) and $p$-span($v_1$,...,$v_r$) is equal to span($v_1$,...,$v_n$). Then $\sum_{i=1}^{s}\beta_{n+i}u_i \in W_1\cap W_2$.  Therefore, there are $\lambda_1,...,\lambda_n\in \mathcal{A}_p$, such that $$\sum_{i=1}^{s}\beta_{n+i}u_i=\sum_{i=1}^{n}\lambda_{i}w_i,$$ i.e. $$\sum_{i=1}^{s}\beta_{n+i}u_i+\sum_{i=1}^{n}(-\lambda_{i})w_i=0.$$ By rewritting the above equation with elements in $\mathcal{A}_p$, we have $$\sum_{i=1}^{s}\beta_{n+i}u_i+\sum_{i=1}^{n}(\overline{\lambda}_{i})w_i=0, $$ with $\beta_{n+i}, \overline{\lambda}_{i}\in \mathcal{A}_p$. As $\beta_2$ is a $p$-linearly independent set, we have $\beta_{n+i}=0$ for  $0\leq i\leq s$. Thus, $$\sum_{i=1}^{n}\delta_iw_i+\sum_{i=1}^{r}\alpha_{n+i}v_i=0$$ and as $\beta_1$ is a $p$-linearly independent set, we have $\delta_i=0$ for $0\leq i\leq n$ and $\alpha_i=0$ for $0\leq i \leq r.$

In the case $W_1\cap W_2=\emptyset$, $\beta_1$ and $\beta_2$ are $p$-bases of $W_1$ and $W_2$, respectively. Then, $\beta=\beta_1\cup \beta_2$ is a $p$-basis of $W_1+W_2$.
\end{proof}

\end{document}